\theoremstyle{plain}  
\newtheorem{theorem}{Theorem}[section]
\numberwithin{equation}{section}
 \newtheorem*{Theorem A}{{\bf Theorem A}}
\newtheorem{corollary}{Corollary}[section]
\newtheorem{lemma}{Lemma}[section]
\numberwithin{equation}{section}
\theoremstyle{remark}
\newtheorem{remark}{Remark}[section]
 \numberwithin{equation}{section}
\def\({\left( }
\def\){\right)}
\def\e{\eqref}
\begin{document}

\title[homogeneous Monge-Amp\`ere equations] {Solutions to homogeneous Monge-Amp\`ere equations of homothetic functions and their applications to production models in economics}

\author[B.-Y. Chen]{Bang-Yen Chen}

\address{Department of Mathematics,
	Michigan State University, East Lansing, Michigan
48824--1027, USA}
\email{bychen@math.msu.edu}

\begin{abstract}   Mathematically, a homothetic function is a function of the form
$f({\bf x})=F(h(x_1,\ldots,x_n))$,
where $h$ is a homogeneous function of any degree $d\ne 0$ and $F$ is a monotonically increasing function. 
In economics homothetic functions are production functions whose marginal technical rate of substitution is homogeneous of degree zero.

In this paper we classify homothetic functions satisfying the homogeneous Monge-Amp\`ere equation. Several applications to production models in economics will also be given.

\end{abstract}

 \subjclass[2000]{Primary:  35J96; Secondary 91B38, 53C40}

\keywords{Homogeneous Monge-Amp\`ere equation; homothetic function; graph; flat space; Gauss-Kronecker curvature.}

\maketitle

\vskip.2in

\section{Introduction}

The original form of the $n$-dimensional Monge-Amp\`ere equation is as follow (cf. \cite{GT,Z}):
\begin{align}\label{1} \det (f_{ij})=\eta(f,f_{i},x_{i})\;\; (1\leq i,j\leq n), \end{align}
where $x_{1},\ldots,x_{n}$ are coordinates and $f_{i}=\frac{\partial f}{\partial x_{i}},\, f_{ij}=\frac{\partial^{2}f}{\partial x_{i}\partial x_{j}}$ are partial derivatives.  Throughout this paper functions are assumed to be twice differentiable.

Monge-Amp\`ere equations arise naturally in several problems in Riemannian geometry, conformal geometry, and CR geometry. One of the simplest of these applications is to the problem of prescribed Gauss curvature.
Such differential equations were first studied by G. Monge in 1784 and later by A.-M. Amp\`ere in 1820.

The Monge-Amp\`ere equation \e{1} is called {\it homogeneous} if $\eta=0$. In such case the graph 
\begin{align}\label{2} G(f)=\big(x_{1},\ldots,x_{n},f(x_{1},\ldots,x_{n})\big)\end{align}
in a Euclidean $(n+1)$-space $\mathbb E^{n+1}$ has null Gauss-Kronecker curvature (see \cite{R}).

Let $\mathbb R$ denote the set of real numbers. We put 
\begin{align}&\notag {\mathbb R}_{+}=\{r\in {\mathbb R}:r>0\}\;\;{\rm and}\;\; {\mathbb R}^{n}_{+}=\{(x_{1},\ldots,x_{n})\in {\mathbb R}^{n}:x_{1},\ldots,x_{n}>0\}.\end{align}
In economics, a {\it production function} is a 
 function $f$ from a domain $D$  of ${\mathbb R}^{n}_{+}$ into ${\mathbb R}^+$ which has non-vanishing first derivatives. 

 Almost all economic theories presuppose a production function, either on the firm level or the aggregate level. In this sense, the production function is one of the key concepts of mainstream neoclassical theories.
By assuming that the maximum output technologically possible from a given set of inputs is achieved, economists using a production function in analysis are abstracting from the engineering and managerial problems inherently associated with a particular production process.

There are two special classes of production functions that are often analyzed in economics; namely, homogeneous and homothetic production functions (cf. \cite{Mi}). A function $ f(x_{1},\cdots,x_{n})$ is called  {\it homogeneous of degree} $d$ or $d$-{\it homogeneous} if  
  \begin{align}\label{1.1}f(tx_{1},\ldots,tx_{n}) = t^{d}f(x_{1},\ldots,x_{n}).\end{align}
 A homogeneous function of degree one is simply called  {\it linearly homogeneous}.  

A {\it homothetic  function} is a production function of the form:  
\begin{align}\label{1.2} Q({\bf x})=F(h(x_1,\ldots,x_n)),\end{align}
where $h(x_1,\ldots,x_n)$ is a homogeneous function of any given degree $d\ne 0$ and $F$ is a monotonically increasing function.

In economics, an {\it isoquant} is a contour line drawn through the set of points at which the same quantity of output is produced while changing the quantities of two or more inputs.  Isoquants are also called equal product curves.
 
 While an indifference curve mapping helps to solve the utility-maximizing problem of consumers, the isoquant mapping deals with the cost-minimization problem of producers. Isoquants are typically drawn on capital-labor graphs, showing the technological tradeoff between capital and labor in the production function, and the decreasing marginal returns of both inputs. 
Homothetic functions are exactly functions whose marginal technical rate of substitution (the slope of the isoquant) is homogeneous of degree zero. Due to this, along rays coming from the origin, the slopes of the isoquants will be the same \cite{c2012}. 

In this paper we classify homothetic functions satisfying the homogeneous Monge-Amp\`ere equation. Several applications to production models in economics will also be given in this paper.

\section{Homogeneous Monge-Amp\`ere equation of homothetic functions}

 If $h(x_1,\ldots,x_n)$ is a $d$-homogeneous function with $d\ne 0$,  it follows from
the Euler Homogeneous Function Theorem  that the homogeneous function $h$ satisfies
      \begin{align}\label{2.1} x_{1}h_{1}+x_{2}h_{2}+\cdots+x_{n} h_{n}=d h.\end{align}  
      After taking the partial derivatives of \e{2.1} with respect to $x_{1},\ldots,x_{n}$, respectively, we obtain
  \begin{equation} \begin{aligned}\label{2.2} &x_{1}h_{11}+x_{2}h_{12}+\cdots+x_{n} h_{1n}=(d-1) h_{1},
  \\& x_{1}h_{12}+x_{2}h_{22}+\cdots+x_{n} h_{2n}=(d-1) h_{2},
  \\& \hskip1in \vdots
  \\ &x_{1}h_{1n}+x_{2}h_{2n}+\cdots+x_{n} h_{nn}=(d-1) h_{n}.\end{aligned}  \end{equation}

If $d=1$, it follows from \e{2.2} and Cramer's rule that 
\begin{align}\label{2.3} x_{1}\det(h_{ij})=\cdots=x_{n}\det(h_{ij})=0.\end{align}

Since \e{2.3} holds for any $x_{1},\ldots,x_{n}$ whenever $h$ is defined, $h$ must satisfies  $\det(h_{ij})=0$. Therefore 
we have the following well-known lemma.
 
 \begin{lemma}\label{L} Every linearly homogeneous function $h$ satisfies the homogeneous Monge-Amp\`ere equation $\det(h_{ij})=0$.
\end{lemma}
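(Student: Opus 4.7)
The plan is to invoke Euler's Homogeneous Function Theorem twice. Applied to the linearly homogeneous function $h$ (the case $d=1$ of \e{2.1}) it gives $x_1 h_1+\cdots+x_n h_n=h$. Differentiating this identity with respect to each $x_j$ in turn reproduces the system \e{2.2} specialized to $d=1$, which collapses to
\begin{equation*}
x_1 h_{1j}+x_2 h_{2j}+\cdots+x_n h_{nj}=0,\qquad j=1,\ldots,n.
\end{equation*}

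Read in matrix form, this asserts that the column vector $(x_1,\ldots,x_n)^{\!\top}$ lies in the kernel of the Hessian $(h_{ij})$. At any point where $(x_1,\ldots,x_n)\ne \mathbf{0}$ the Hessian therefore admits a nontrivial null vector and is singular, forcing $\det(h_{ij})=0$. Equivalently, Cramer's rule applied to the same system yields the array of identities $x_i\det(h_{ij})=0$ for every $i$, after which one cancels any nonzero coordinate. Either formulation is a one-line linear-algebra consequence of the first step.

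The one point to watch is the locus where $\mathbf{x}=\mathbf{0}$. In the production-function context of the paper the domain sits inside $\mathbb R^n_+$, so this never arises; more generally, since $\det(h_{ij})$ is continuous wherever $h$ is twice differentiable, its vanishing on the open dense set $\{\mathbf{x}\ne\mathbf{0}\}$ automatically extends to the full domain of definition. I do not anticipate any genuine obstacle: the proof reduces to a single application of Euler's theorem followed by the elementary observation that a square matrix with a nontrivial null vector must be singular.
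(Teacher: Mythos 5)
Your proof is correct and follows essentially the same route as the paper: Euler's theorem differentiated once shows that $(x_1,\ldots,x_n)^{\top}$ is a null vector of the Hessian (equivalently, Cramer's rule gives $x_i\det(h_{ij})=0$), whence the determinant vanishes. Your extra remark handling the locus $\mathbf{x}=\mathbf{0}$ is a small tidiness improvement over the paper's one-line dismissal, but it is not a different argument.
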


Now we  give the following.

\begin{theorem}\label{T:2.1} Let  $h({\bf x})$ be a homogeneous function with degree $d\ne 1$. If $h$ satisfies the homogeneous Monge-Amp\`ere equation $\det(h_{ij})=0$, then for every function $F$ with $F'\ne 0$ the homothetic function $f({\bf x})=F(h({\bf x}))$ satisfies 
the homogeneous Monge-Amp\`ere equation: $\det(f_{ij})=0$.
\end{theorem}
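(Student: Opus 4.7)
My plan is to reduce the computation of $\det(f_{ij})$ to a rank-one perturbation of the Hessian $H := (h_{ij})$ of $h$, and then exploit the Euler relations \eqref{2.2} to kill the perturbation term. First, applying the chain rule gives
\begin{align*}
f_i = F'(h)\, h_i, \qquad f_{ij} = F'(h)\, h_{ij} + F''(h)\, h_i h_j .
\end{align*}
Pulling out the factor $F'(h)\ne 0$ from every row yields
\begin{align*}
\det(f_{ij}) = \bigl(F'(h)\bigr)^{n} \det\!\Bigl(h_{ij} + \mu\, h_i h_j\Bigr), \qquad \mu := F''(h)/F'(h) .
\end{align*}
So it suffices to show $\det(H + \mu\, \mathbf{h}\mathbf{h}^{T}) = 0$, where $\mathbf{h} := (h_{1},\ldots,h_{n})^{T}$.

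Next I would invoke the matrix determinant lemma in its general form, valid even for singular $A$:
\begin{align*}
\det(A + \mathbf{u}\mathbf{v}^{T}) = \det(A) + \mathbf{v}^{T}\operatorname{adj}(A)\,\mathbf{u} .
\end{align*}
Applied with $A=H$, $\mathbf{u}=\mu\mathbf{h}$, $\mathbf{v}=\mathbf{h}$, and using the hypothesis $\det(H)=0$, this collapses to
\begin{align*}
\det(f_{ij}) = \bigl(F'(h)\bigr)^{n}\, \mu\, \mathbf{h}^{T}\operatorname{adj}(H)\,\mathbf{h} .
\end{align*}
The whole theorem therefore comes down to verifying the single scalar identity $\mathbf{h}^{T}\operatorname{adj}(H)\,\mathbf{h} = 0$.

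This is where the hypothesis $d\ne 1$ enters. The system \eqref{2.2} can be written compactly as $H\mathbf{x} = (d-1)\mathbf{h}$, where $\mathbf{x}=(x_{1},\ldots,x_{n})^{T}$. Since $d-1\ne 0$, this expresses $\mathbf{h}$ as a scalar multiple of $H\mathbf{x}$, so $\mathbf{h}$ lies in the image of $H$. But $\operatorname{adj}(H)\cdot H = \det(H)\, I = 0$ by the cofactor identity and the hypothesis. Consequently
\begin{align*}
\operatorname{adj}(H)\,\mathbf{h} = \tfrac{1}{d-1}\,\operatorname{adj}(H)\,H\,\mathbf{x} = 0 ,
\end{align*}
which immediately forces $\mathbf{h}^{T}\operatorname{adj}(H)\,\mathbf{h}=0$ and hence $\det(f_{ij})=0$.

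The only genuinely non-routine step is the observation in the third paragraph: recognizing that the Euler relations \eqref{2.2} place $\mathbf{h}$ in $\operatorname{Im}(H)$, which is exactly the kernel of $\operatorname{adj}(H)$ when $H$ is singular. Once this is spotted, the proof is essentially a one-line determinant computation; without it, one might try to expand $\det(H+\mu\mathbf{h}\mathbf{h}^{T})$ directly and face an unwieldy combinatorial mess. The assumption $d\ne 1$ is used in exactly one place (dividing by $d-1$), which neatly explains why the case $d=1$ is handled separately by Lemma~\ref{L}.
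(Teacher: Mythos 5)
Your proof is correct and is essentially the same argument as the paper's: the identity $\det(f_{ij})=(F')^{n}\bigl(\det(h_{ij})F'+F''\sum_{i,j}h_ih_jH_{ij}\bigr)$ in the paper is exactly your matrix determinant lemma written in cofactor components, and the paper likewise kills the perturbation term by feeding the Euler relations $H\mathbf{x}=(d-1)\mathbf{h}$ into the cofactor identity. Your matrix-language phrasing via $\operatorname{adj}(H)H=\det(H)I$ is a clean repackaging of the same computation.
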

\begin{proof} Let  $h$ be a homogeneous function with degree $d\ne 1$. Since $f({\bf x})=F(h({\bf x}))$, we have
\begin{align}\label{2.4} f_{i}=F'(u)h_{i},\;\;   f_{ij}= F'h_{ij}+F'' h_{i}h_{j}, \;\; 1\leq i,j\leq n.\end{align} 
It follows from \e{2.4} that
\begin{align} &\label{2.5}  \det(f_{ij})=(F'(u))^{n}\Bigg\{\det(h_{ij})F'(u)+ F''(u)\sum_{i,j=1}^{n} h_{i} h_{j}H_{ij}\Bigg\},\end{align}
where  $H_{ij}=(-1)^{i+j} M_{ij}$ is the cofactor of $h_{ij}$ and $M_{ij}$ is the  minor of $h_{ij}$ for the Hessian matrix $(h_{ij})$. 

Since $h$ is a homogeneous function with degree $d\ne 1$, \e{2.2} gives
 \begin{equation} \begin{aligned}\label{2.6} &h_{1}=\frac{x_{1}h_{11}+x_{2}h_{12}+\cdots+x_{n} h_{1n}}{d-1},
   \\& \hskip.5in \cdots
  \\ &h_{n}=\frac{x_{1}h_{1n}+x_{2}h_{2n}+\cdots+x_{n} h_{nn}}{d-1}.\end{aligned}  \end{equation}
 After substituting \e{2.6} into \e{2.5} we find
 \begin{equation}\begin{aligned} &\label{2.7}  \det(f_{ij})=(F'(u))^{n}\Bigg\{ \det(h_{ij}) F'(u) +\frac{F''(u)}{(d-1)^{2}}\sum_{i,j,k,\ell=1}^{n} x_{k}x_{\ell} h_{ik}h_{j\ell}H_{ij}\Bigg\}.
 \end{aligned}\end{equation}
Now, by applying the Cofactor Expansion Formula for determinants to \e{2.7} and then using \e{2.2} and \e{2.1}, we derive  that
\begin{equation}\begin{aligned} \label{2.8}  \det(f_{ij})&= \det(h_{ij})(F'(u))^{n}\left\{  F'(u) +\frac{F''(u)}{d-1}\sum_{i=1}^{n} x_{i}h_{i}\right\}
\\&= \det(h_{ij})\frac{(F'(u))^{n}}{d-1}\left\{ (d-1) F'(u) +d h F''(u)\right\}. \end{aligned}\end{equation}
Consequently, if $h$ satisfies the homogeneous Monge-Amp\`ere equation $\det(h_{ij})=0$, then  $f=F\circ h$ satisfies  the homogeneous Monge-Amp\`ere equation $\det(f_{ij})=0$.
\end{proof}

Conversely, in views of Lemma \ref{L} and Theorem \ref{T:2.1} we give the following.

\begin{theorem}\label{T:2.2} Let $F(u)$ be a function with $F'\ne 0$ and $u=h({\bf x})$ be a homogeneous function with degree $d\ne 1$. If  $f=F\circ h$ satisfies 
the homogeneous Monge-Amp\`ere equation $\det(f_{ij})=0$, then either 
\vskip.05in

 {\rm (i)} the inner function $h$ satisfies $\det(h_{ij})=0$ or 

\vskip.05in

{\rm (ii)} up to constants, $f=F\circ h$ is a linearly homogeneous function.
\end{theorem}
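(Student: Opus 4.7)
The plan is to invoke identity \eqref{2.8} from the proof of Theorem \ref{T:2.1}. That identity was derived using only the $d$-homogeneity of $h$ with $d\ne 1$ and $F'\ne 0$, never invoking $\det(h_{ij})=0$, so it is at our disposal. Substituting the hypotheses $\det(f_{ij})\equiv 0$ and $F'\ne 0$ collapses \eqref{2.8} to the pointwise product equation
\[
\det(h_{ij})(\mathbf{x})\,\phi(h(\mathbf{x}))=0,\qquad \phi(u):=(d-1)F'(u)+d\,u\,F''(u).
\]
If $\det(h_{ij})\equiv 0$ the conclusion is case (i), so I would focus on the opposite situation and try to derive case (ii).

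Assuming some $\mathbf{x}_0$ with $\det(h_{ij})(\mathbf{x}_0)\ne 0$, the next step would be to promote nonvanishing at a single point to an entire ray. Differentiating $h(t\mathbf{x})=t^d h(\mathbf{x})$ twice shows each $h_{ij}$ to be $(d-2)$-homogeneous, hence $\det(h_{ij})$ is $n(d-2)$-homogeneous. Consequently $\det(h_{ij})(t\mathbf{x}_0)\ne 0$ for every $t>0$, and the product equation forces $\phi\bigl(t^d h(\mathbf{x}_0)\bigr)=0$ for every $t>0$. This yields $\phi\equiv 0$ on a full half-line of $u$-values (which in the production-function setting $h>0$ is all of $\mathbb{R}_+$).

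On that interval $\phi\equiv 0$ is a linear second-order ODE in $F$. I would solve it by the substitution $G:=F'$, which turns $(d-1)G+d\,u\,G'=0$ into a separable first-order equation with general solution $G(u)=Cu^{(1-d)/d}$; one more integration gives $F(u)=A u^{1/d}+B$ with $A\ne 0$ (since $F'\ne 0$). Hence $f(\mathbf{x})=A\,h(\mathbf{x})^{1/d}+B$, and because $h^{1/d}$ inherits linear homogeneity from the $d$-homogeneity of $h$, we conclude that $f-B$ is linearly homogeneous, which is case (ii) up to the constants $A$ and $B$.

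The only subtle step is the dichotomy in the second paragraph: the product identity gives only pointwise vanishing of the two factors, so in principle each could vanish on a different piece of the domain. The ingredient that turns this into a genuine alternative is the homogeneity of $\det(h_{ij})$, which propagates a single point of nonvanishing to an entire ray; combined with the $d$-homogeneity of $h$ this provides a full interval of $u$-values on which $\phi$ vanishes and the ODE argument can be run. Everything after that reduction is routine integration.
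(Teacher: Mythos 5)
Your proof is correct and follows essentially the same route as the paper: both arguments start from identity \eqref{2.8}, split according to whether $\det(h_{ij})$ vanishes, and solve the resulting Euler-type ODE \eqref{2.9} to get $F(u)=\alpha u^{1/d}+\beta$, hence linear homogeneity of $f$ up to constants. The only place you go beyond the paper is the dichotomy step: the paper passes directly from the pointwise product identity to ``either $\det(h_{ij})\equiv 0$ or the ODE holds,'' whereas you justify this by noting that $\det(h_{ij})$ is $n(d-2)$-homogeneous, so nonvanishing at one point propagates along a whole ray and forces $\phi(u)=(d-1)F'(u)+duF''(u)$ to vanish on a full interval of $u$-values --- a small gap in the published argument that your version closes.
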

\begin{proof} Let $F(u)$ be a twice differentiable function with $F'\ne 0$ and $h$ is a homogeneous function with degree $d\ne 1$. Then we have \e{2.8}. Suppose that $f=F\circ h$ satisfies 
the homogeneous Monge-Amp\`ere equation $\det(h_{ij})=0$. Then it follows from \e{2.8} that either $ \det(h_{ij})=0$ or $F$ satisfies
\begin{align} \label{2.9} d u F''(u)+(d-1) F'(u) =0.\end{align}

If $\det(h_{ij})=0$ holds, we obtain case (i). Otherwise, after solving \e{2.9} we obtain $F(u)=\alpha u^{\frac{1}{d}}+\beta$ for some real numbers $\alpha,\beta$ with $\alpha\ne 0$. Since $h$ is a homogeneous function of degree $d$, so up to constants $f=F\circ h$ is a linearly homogeneous function.
\end{proof}

\section{Two-input homothetic functions satisfying Monge-Amp\`ere equation}

The next theorem completely classifies two-input homothetic functions satisfying the homogeneous Monge-Amp\`ere equation.

\begin{theorem}\label{T:3.1}  Let $F$ be a  function with $F'\ne 0$ and  $h(x,y)$ be a homogeneous function. Then  $f=F(h(x,y))$ satisfies  the homogeneous Monge-Amp\`ere equation $\det(f_{ij})=0$ if and only if either
 \begin{enumerate}

\item the inner function $h$  is of the form $(ax+by)^{d}$ for some constants $a,b$, or
 
\item  up to constants $f(x,y)$ is a linearly homogeneous function.
\end{enumerate} 
\end{theorem}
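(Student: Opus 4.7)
The plan is to reduce the non-trivial direction to Theorem~\ref{T:2.2} when $d\ne 1$, to turn the hypothesis $\det(h_{ij})=0$ into an ODE by exploiting the homogeneity of $h$, and to handle the borderline case $d=1$ by a direct Hessian computation.

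The ``if'' direction is immediate. If $h=(ax+by)^d$, then $f(x,y)=F((ax+by)^d)=G(ax+by)$ with $G(t):=F(t^d)$, so $f$ depends only on the single linear combination $ax+by$ and its Hessian has rank at most one; equivalently, $\det(h_{ij})=0$ by direct computation, after which Theorem~\ref{T:2.1} closes the $d\ne 1$ subcase. If instead $f$ is (up to constants) linearly homogeneous, Lemma~\ref{L} applied to $f$ itself yields $\det(f_{ij})=0$.

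For the ``only if'' direction, suppose $\det(f_{ij})=0$. When $d\ne 1$, Theorem~\ref{T:2.2} already splits the conclusion into $\det(h_{ij})=0$ or alternative (2), so the task reduces to classifying $d$-homogeneous $h(x,y)$ with $\det(h_{ij})=0$. Writing $h(x,y)=x^{d}g(y/x)$ and computing the Hessian directly, the condition $\det(h_{ij})=0$ should collapse---after a tidy cancellation of the $tg'g''$ cross terms---to the separable ODE
\begin{equation*}
d\,g(t)\,g''(t)=(d-1)\bigl(g'(t)\bigr)^{2}.
\end{equation*}
A first integration of $d\,g''/g'=(d-1)\,g'/g$ gives $(g')^{d}=c\,g^{d-1}$, and a further separation gives $g(t)=(\alpha t+\beta)^{d}$ (the degenerate solutions $g\equiv 0$ and $g'\equiv 0$ fit into the same family). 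Hence $h(x,y)=(\alpha y+\beta x)^{d}$, which is conclusion (1).

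The remaining case $d=1$ escapes Theorem~\ref{T:2.2}, since $\det(h_{ij})=0$ is automatic by Lemma~\ref{L}. Here I would expand $\det(f_{ij})$ using $f_{ij}=F'h_{ij}+F''h_{i}h_{j}$ and simplify with the Euler identities $xh_{11}+yh_{12}=0$, $xh_{12}+yh_{22}=0$, and $xh_{1}+yh_{2}=h$; the expression collapses to a multiple of $F'(h)F''(h)\,h_{12}\,h^{2}/(xy)$, and its vanishing forces either $F''\equiv 0$ (so $f$ is affine in $h$, hence linearly homogeneous up to constants---case (2)), or $h_{12}\equiv 0$, in which case $h=A(x)+B(y)$ and linear homogeneity pins both summands down to $h=ax+by$, yielding case (1) with $d=1$. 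The main obstacle is the ODE step in the generic case: the integration must be carried out carefully, the degenerate branches $g'\equiv 0$ and $g\equiv 0$ tracked, and the parametrization $h=x^{d}g(y/x)$ justified on the relevant domain.
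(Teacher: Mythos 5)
Your argument is correct, and the two halves of it relate to the paper's proof differently. For $d=1$ you do exactly what the paper does: expand $\det(f_{ij})$ via $f_{ij}=F'h_{ij}+F''h_ih_j$, use Lemma~\ref{L} to kill the $(F')^2\det(h_{ij})$ term, and use the differentiated Euler identities to collapse $h_1^2h_{22}+h_2^2h_{11}-2h_1h_2h_{12}$ to $-h_{12}h^2/(xy)$, forcing $F''=0$ or $h_{12}=0$; your stated multiple is the right one (the paper's displayed version of this identity contains a typo but is the same computation). For $d\ne 1$ you diverge: the paper substitutes $\hat F(u)=F(u^d)$, $\hat h=h^{1/d}$ to reduce to the $d=1$ case, whereas you invoke Theorem~\ref{T:2.2} to split off alternative (2) and then classify two-input $d$-homogeneous solutions of $\det(h_{ij})=0$ directly by writing $h=x^dg(y/x)$ and integrating $dgg''=(d-1)(g')^2$. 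Your Hessian computation is right — the cross terms do cancel and $\det(h_{ij})=(d-1)x^{2d-4}\left(dgg''-(d-1)(g')^2\right)$ — and the integration to $g=(\alpha t+\beta)^d$ is correct, with the degenerate branch $g'\equiv 0$ handled by ODE uniqueness (a solution with $g'(t_0)=0$ and $g(t_0)\ne 0$ is constant, hence in the family). What your route buys is a self-contained classification of degenerate two-input homogeneous functions, which is of independent interest; what it costs is the ODE analysis, which the paper's power substitution avoids entirely by recycling the $d=1$ argument. Both proofs inherit the same pointwise-versus-identical vanishing issue from equation \eqref{2.8} (the dichotomy ``$\det(h_{ij})=0$ or the $F$-factor vanishes'' is a priori pointwise), so you are no worse off than the paper there.
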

\begin{proof}  Let $F$ be a function with $F'(u)\ne 0$ and let $u=h(x,y)$ be a homogeneous function. Assume that $f=F\circ h$ satisfies  the homogeneous Monge-Amp\`ere equation $\det(f_{ij})=0$. Then  it follows from  \e{2.5} that $h$ and $F$ satisfy
\begin{align} &\label{3.1}  0=\det(h_{ij})F'(u)+ F''(u)(h_{1}^{2} h_{22}+h_{2}^{2}h_{11}-2 h_{1}h_{2}h_{12}),\end{align}
where $h_{1}=h_{x},h_{2}=h_{y},h_{11}=h_{xx}$ etc.

  \vskip.05in
 {\it Case} (a): $d=1$.  Lemma \ref{L} implies that $\det(h_{ij})=0$.
 Hence  equation \e{3.1} reduces to  \begin{align} \label{3.2} 0=(h_{1}^{2}h_{22}+h_{2}^{2} h_{11}-2 h_{1} h_{2}h_{12})F''(u).\end{align}

If $F''=0$, then $F(u)=\alpha u+\beta$ for some real numbers $\alpha,\beta$ with $\alpha\ne 0$. Thus, up to the  constant $\beta$, $f(x,y)=\alpha h(x,y)$ which is a linearly homogeneous function. Thus we obtain case (2) of the theorem.

Next, let us assume that $F''\ne 0$. Then  \e{3.1} yields
 \begin{align} \label{3.3}h_{1}^{2}h_{22}+h_{2}^{2} h_{11}-2 h_{1} h_{2}h_{12}=0.\end{align}

From \e{2.2} we get $$h_{11}=-\(\frac{y}{x}\)h_{12},\;\;  h_{22}=-\(\frac{x}{y}\)h_{12}.$$ By substituting these into \e{3.3} we find
 \begin{align} \label{3.4}0=(xh_{1}^{2}+{y}h_{2})^{2}h_{12}=d^{2} h^{2} h_{12}.\end{align}
 Therefore  $h_{12}=0$, which implies that $h(x,y)=p(x)+q(y)$ for some functions $p(x),\,q(y)$. Since $h(x,y)$ is linearly homogeneous, we must have $h(x,y)=ax+by$ for some real numbers $a,b$.  This gives case (1) of the theorem with $d=1$,

\vskip.05in
 {\it Case} (b): $d\ne 1$. Let us consider the functions $\hat F$ and $\hat h$ given by
 \begin{align} \hat F(u)=F(u^{d}),\;\; \hat h(x,y)=(h(x,y))^{\frac{1}{d}}.\end{align}
 Then $\hat h$ is a linear homogeneous function such that $$f(x,y)=F(h(x,y))=\hat F(\hat h(x,y)).$$ Since $f=\hat F\circ \hat h$ satisfies the 
 the homogeneous Monge-Amp\`ere equation $\det(f_{ij})=0$ and $\deg \hat h=1$, we may apply the same argument given in case (a) to conclude that either, up to constants, $f$ is linearly homogeneous or $\hat h(x,y)=a x+by$ for some constants $a,b$. Therefore in the latter case we have $h(x,y) =(ax+by)^{d}$.
 
 The converse can be verify easily. 
\end{proof}

\section{$n$-input homothetic functions satisfying Monge-Amp\`ere equation}

Theorem \ref{T:3.1} is false if $n\geq 3$. 
For example, if $\psi(y,z)$ is a linearly homogeneous function and 
\begin{align}\label{4.1}h(x,y,z)=x+\psi(y,z),\end{align} then for any function $F(u)$, the composition $f=F\circ h$ satisfies the homogeneous Monge-Amp\`ere equation $\det(f_{ij})=0$.

The following theorem determines all $n$-input homothetic functions with $n\geq 3$ which satisfy the homogeneous Monge-Amp\`ere equation.

\begin{theorem}\label{T:4.1} Let $F$ be a function with $F\ne 0$ and  $h$ an $n$-input  $d$-homogeneous function with $d\ne 0$ and $n\geq 3$. Then  $f=F\circ h$ satisfies 
the homogeneous Monge-Amp\`ere equation $\det(f_{ij})=0$ if and only if either
 \begin{enumerate}

\item up to constants $f$ is a linearly homogeneous function, or

\item $f$ is of the form $F\big(x_{1}\phi\big(\frac{x_{2}}{x_{1}},\ldots,\frac{x_{n}}{x_{1}}\big)\big)$, where $\phi(u_{2},\ldots,u_{n})$ is an $(n-1)$-input function satisfying $\det(\phi_{ij})=0$.
\end{enumerate} 
\end{theorem}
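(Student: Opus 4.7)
The plan is to combine Theorem \ref{T:2.2} with a rank analysis of the Hessian of a linearly homogeneous function. First I would reduce to the case $d=1$: if $d\neq 1$, Theorem \ref{T:2.2} asserts that either $f$ is linearly homogeneous up to constants (case (1)) or $\det(h_{ij})=0$. In the latter subcase, imitating Case (b) of the proof of Theorem \ref{T:3.1}, I set $\hat h=h^{1/d}$ and $\hat F(u)=F(u^{d})$, so $\hat h$ is linearly homogeneous and $f=\hat F\circ \hat h$. Hence it suffices to treat $d=1$.

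With $d=1$, Lemma \ref{L} gives $\det(h_{ij})=0$ automatically, so equation \eqref{2.5} reduces the condition $\det(f_{ij})=0$ to $F''(u)\sum_{i,j=1}^{n}h_{i}h_{j}H_{ij}=0$. If $F''\equiv 0$ then $F$ is affine, so up to additive and multiplicative constants $f=h$ is linearly homogeneous, giving case (1). Otherwise, by continuity one must have $\sum_{i,j}h_{i}h_{j}H_{ij}\equiv 0$, which I analyze next.

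Equation \eqref{2.2} with $d=1$ shows the radial vector $(x_{1},\ldots,x_{n})^{T}$ lies in the kernel of the Hessian $(h_{ij})$, so its rank is at most $n-1$. When the rank equals $n-1$, the symmetric cofactor matrix $(H_{ij})$ is rank-one with columns spanning that same kernel, forcing $H_{ij}=\lambda\, x_{i}x_{j}$ for some scalar $\lambda$. Euler's identity $\sum x_{i}h_{i}=h$ then gives
\begin{equation*}
\sum_{i,j}h_{i}h_{j}H_{ij}=\lambda\Big(\sum_{i}x_{i}h_{i}\Big)^{2}=\lambda h^{2},
\end{equation*}
so (together with the trivial lower-rank case) the vanishing of this sum is equivalent to the rank of $(h_{ij})$ being at most $n-2$. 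To translate this into a condition on $\phi$, write $h(x_{1},\ldots,x_{n})=x_{1}\phi(x_{2}/x_{1},\ldots,x_{n}/x_{1})$ with $\phi(u_{2},\ldots,u_{n}):=h(1,u_{2},\ldots,u_{n})$. A direct computation gives the factorization $(h_{ij})=x_{1}^{-1}P(\phi_{ij})P^{T}$, where $P$ is the $n\times(n-1)$ matrix whose top row is $(-u_{2},\ldots,-u_{n})$ and whose bottom block is $I_{n-1}$. Since $P$ has full column rank $n-1$, the rank of $(h_{ij})$ equals the rank of $(\phi_{ij})$, so the condition becomes $\det(\phi_{ij})=0$, yielding case (2). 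The converse direction is routine from Lemma \ref{L} and \eqref{2.5}.

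The main obstacle is the identification $H_{ij}=\lambda\, x_{i}x_{j}$: it relies on the symmetry of the Hessian (which makes the cofactor matrix symmetric) together with the rank-one structure of the adjugate of a corank-one matrix, and it is this step that translates the global condition $\det(f_{ij})=0$ into a clean linear-algebraic statement about $\phi$. Once this algebraic step is in hand, the remaining computations are direct.
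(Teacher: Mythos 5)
Your proof is correct, and its skeleton coincides with the paper's: reduce $d\ne 1$ to $d=1$ via $\hat h=h^{1/d}$, $\hat F(u)=F(u^d)$, then in the linearly homogeneous case write $h=x_1\phi(x_2/x_1,\ldots,x_n/x_1)$ and show that $\det(f_{ij})=0$ forces $F''=0$ or $\det(\phi_{ij})=0$. Where you genuinely diverge is in how that last equivalence is obtained. The paper simply asserts the identity \eqref{4.4}, namely $x_1^{n-1}\det(f_{ij})=\phi^2(F')^{n-1}F''\det(\phi_{ij})$, with no derivation. You instead prove the equivalence by linear algebra: since \eqref{2.2} with $d=1$ puts the radial vector in $\ker(h_{ij})$, the symmetric adjugate of a corank-one Hessian must be $H_{ij}=\lambda x_ix_j$, whence $\sum h_ih_jH_{ij}=\lambda h^2$ by Euler's identity, and the congruence $(h_{ij})=x_1^{-1}P(\phi_{ij})P^T$ with $P$ of full column rank converts ``$\mathrm{rank}(h_{ij})\le n-2$'' into $\det(\phi_{ij})=0$. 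This actually supplies the missing proof of \eqref{4.4} (indeed, computing $H_{11}$ from the congruence gives $\lambda=x_1^{-(n+1)}\det(\phi_{ij})$, which recovers \eqref{4.4} exactly), so your route is more complete than the paper's at its one nontrivial computational step. Two small caveats, both shared with the paper's own arguments: the step $\lambda h^2=0\Rightarrow\lambda=0$ uses $h\ne 0$ (harmless on $\mathbb R^n_+$ for a production function), and the dichotomy ``$F''\equiv 0$ or $\sum h_ih_jH_{ij}\equiv 0$'' is really a pointwise alternative, so strictly speaking the conclusion holds locally on the set where $F''\ne 0$; the paper glosses over the same point in Theorem \ref{T:3.1}. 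Your appeal to Theorem \ref{T:2.2} in the reduction is harmless but redundant, since the substitution $\hat h=h^{1/d}$ alone already reduces everything to the $d=1$ case.
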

\begin{proof} Let $F$ be a function with $F'\ne 0$ and $h(x,y,z)$ be a homogeneous function of degree $d\ne 0$. 

  \vskip.05in
 {\it Case} (a): $d=1$. Since $h$ is a linearly homogeneous function, we may put
\begin{align}\label{4.2} h(x_{1},\ldots,x_{n})=x_{1} \phi\(\frac{x_{2}}{x_{1}},\ldots,\frac{x_{n}}{x_{1}}\)\end{align}
for some function $\phi$. Thus we have
\begin{align}\label{4.3} f(x_{1},\ldots,x_{n})=F\(x_{1}\phi\!\(\frac{x_{2}}{x_{1}},\ldots,\frac{x_{n}}{x_{1}}\)\! \).\end{align}
It follows from \e{4.3} that
\begin{align}\label{4.4} {x_{1}^{n-1}}\det(f_{ij})={\phi^{2} (F')^{n-1}F''}\det(\phi_{ij}).\end{align}
We conclude from \e{4.4} that if $f=F\circ h$ satisfies  $\det(f_{ij})=0$, then we have either $F''=0$ or $\det(\phi_{ij})=0$.  

If $F''=0$, then up to a suitable constant $f$ is a linearly homogeneous  function. If $\det(\phi_{ij})=0$, $\phi$ satisfies  $\det(\phi_{ij})=0$.

\vskip.05in
 {\it Case} (b): $d\ne 1$. Let us consider the functions $\hat F$ and $\hat h$ defined by
 \begin{align} \hat F(u)=F(u^{d}),\;\; \hat h(x_{1},\ldots,x_{n})=(h(x_{1},\ldots,x_{n}))^{\frac{1}{d}}.\end{align}
 Then $\hat h$ is a linear homogeneous function such that $$f(x_{1},\ldots,x_{n})=F(h((x_{1},\ldots,x_{n}))=\hat F(\hat h(x_{1},\ldots,x_{n})).$$ 
 Because $f=\hat F\circ \hat h$ satisfies
  $\det(f_{ij})=0$ and $\deg \hat h=1$ holds, we may apply the same argument as case (a) to conclude that either
  
   (i) up to constants $f$ is a linearly homogeneous function or
   
    (ii) $\hat h$ takes the form:
  $$\hat h(x_{1},\ldots,x_{n})=x_{1}\varphi \!\(\frac{x_{2}}{x_{1}},\ldots,\frac{x_{n}}{x_{1}}\)\!,$$ where $\varphi$ is an $(n-1)$-input function satisfying $\det(\varphi_{ij})=0$. Consequently, we obtain case (2) of the theorem.
 
 The converse can be verify easily. 
\end{proof}

An immediate consequence of Theorem \ref{T:4.1} is the following.

\begin{corollary}\label{C:4.1}  Let $\phi(u_{2},\ldots,u_{n})$ be a function satisfying the homogeneous Monge-Amp\`ere equation. Then, for each function $F$ with $F'\ne 0$,  the homothetic function 
$$f(x_{1},\ldots,x_{n})=F\(x_{1} \phi \!\(\frac{x_{2}}{x_{1}},\ldots,\frac{x_{n}}{x_{1}}\)\!\)$$
satisfies the the homogeneous Monge-Amp\`ere equation  $\det(f_{ij})=0$.
\end{corollary}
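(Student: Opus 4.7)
The plan is to recognize this corollary as precisely the converse direction of case (2) in Theorem \ref{T:4.1}, and to derive it directly from the algebraic identity established inside that proof rather than invoking the theorem as a black box.

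Concretely, I would set $h(x_{1},\ldots,x_{n}) := x_{1}\,\phi(x_{2}/x_{1},\ldots,x_{n}/x_{1})$. This $h$ is manifestly linearly homogeneous, so writing $f=F\circ h$ places the problem in Case (a) of the proof of Theorem \ref{T:4.1}. The central computation there yields the identity
\begin{align*}
x_{1}^{n-1}\det(f_{ij}) \;=\; \phi^{2}\,(F'(u))^{n-1}\,F''(u)\,\det(\phi_{ij}),
\end{align*}
obtained purely by chain-rule differentiation of $F\(x_{1}\phi(x_{2}/x_{1},\ldots,x_{n}/x_{1})\)$ together with an expansion of the resulting Hessian determinant. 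Nothing in its derivation presupposes that $f$ already satisfies any Monge-Amp\`ere equation; it is an unconditional identity valid for any smooth $\phi$ and any $F$ with $F'\ne 0$. Since by hypothesis $\det(\phi_{ij}) = 0$, the right-hand side vanishes, and on the natural domain $\mathbb{R}^{n}_{+}$ (where $x_{1}>0$) we may divide by $x_{1}^{n-1}$ to conclude $\det(f_{ij})=0$.

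I expect essentially no obstacle here: the corollary collapses to substituting the hypothesis into the right-hand side of \e{4.4}. The one point worth a sentence of explicit justification is that \e{4.4} is genuinely unconditional rather than a conclusion reached only after assuming $\det(f_{ij})=0$; once this is made clear, the argument reduces to a one-line verification.
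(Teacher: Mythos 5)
Your proposal is correct and follows essentially the same route as the paper: the paper presents the corollary as an immediate consequence of Theorem \ref{T:4.1} (whose ``converse'' direction rests on the unconditional identity \eqref{4.4}), and you simply make explicit that \eqref{4.4} is a chain-rule identity not contingent on $\det(f_{ij})=0$, so that $\det(\phi_{ij})=0$ forces the right-hand side to vanish and $x_{1}>0$ lets you divide through. Your one flagged point --- that \eqref{4.4} must be read as unconditional --- is exactly the right thing to emphasize, and the argument is complete.
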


\begin{remark} Since there are ample examples of functions $\phi(u_{2},\ldots,u_{n})$ which satisfy the homogeneous Monge-Amp\`ere equation, Corollary \ref{C:4.1} implies that there exist infinitely many  homothetic functions satisfying the corresponding homogeneous Monge-Amp\`ere equation. \end{remark}

\section{Applications to production models in economics}

In economics, goods that are completely substitutable with each other are called perfect substitutes. They may be characterized as goods having a constant marginal rate of substitution. Mathematically,
a production function is a {\it perfect substitute} if it is of the form:
\begin{align}\label{5.1} f({\bf x})= \sum_{i=1}^{n}a_{i}x_{i}\end{align}
for some nonzero constants $a_{1},\ldots,a_{n}$. 

Since every $n$-input production function $f({\bf x})$ can be identified with its graph $$G(f)=(x_{1},\ldots,x_{n},f),$$ which represents the quantity of output. Consequently,
 many important properties of production functions in economics can be interpreted in terms of the geometry of their graphs (cf. for instance, \cite{c,c1,c2,c2012,VV,V}).
 
 In this section we provide several applications of our results to some important production models.
 
 \begin{corollary} \label{C:5.1} Let $f(x,y)=F(h(x,y))$ be a homothetic production function. Then 
 the graph of  $f$ is a flat surface if and only if either
 \begin{enumerate}

\item $f(x,y)$ is  linearly homogeneous, or
 
\item $F(u)$ is a strictly increasing function and $h(x,y)$ is a perfect substitute.
\end{enumerate} 
  \end{corollary}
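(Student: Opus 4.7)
The strategy is to reduce the corollary to Theorem~\ref{T:3.1} by translating the geometric condition ``$G(f)$ is a flat surface'' into the homogeneous Monge-Amp\`ere equation, and then reading off the two cases.

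First I would recall the classical formula for the Gauss--Kronecker curvature of a surface given as a graph $z=f(x,y)$ in $\mathbb{E}^3$, namely
$$K=\frac{\det(f_{ij})}{(1+f_x^{2}+f_y^{2})^{2}}.$$
Hence flatness of $G(f)$ is equivalent to the Monge-Amp\`ere equation $\det(f_{ij})=0$. Since $f=F\circ h$ is a homothetic production function, $h$ is homogeneous of some degree $d\neq 0$ and $F$ is monotonically increasing with $F'\neq 0$, so Theorem~\ref{T:3.1} applies and furnishes a dichotomy: either (up to constants) $f$ is linearly homogeneous, which is case~(1) of the corollary, or $h(x,y)=(ax+by)^{d}$ for constants $a,b$.

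To pass from this second alternative to case~(2) of the corollary, I would reparametrize the composition by setting $\tilde h(x,y)=ax+by$ (a perfect substitute in the sense of \eqref{5.1}) and $\tilde F(u)=F(u^{d})$, so that $f=\tilde F\circ\tilde h$. A direct differentiation gives
$$\tilde F'(u)=d\,u^{d-1}F'(u^{d}),$$
which has constant sign on $\mathbb R_{+}$ since $F$ is monotonically increasing and $F'\neq 0$; after absorbing an overall sign into the pair $(a,b)$ one may arrange $\tilde F$ to be strictly increasing, which yields case~(2).

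For the converse, if $f$ is linearly homogeneous then Lemma~\ref{L} immediately gives $\det(f_{ij})=0$; and if $f(x,y)=F(ax+by)$ with $F$ strictly increasing the direct calculation $f_{xx}f_{yy}-f_{xy}^{2}=a^{2}b^{2}(F''(ax+by))^{2}-(abF''(ax+by))^{2}=0$ shows $\det(f_{ij})=0$. In either case the graph of $f$ is flat. The main subtlety is the bookkeeping in the reparametrization above, particularly tracking the sign of $d$ so that the new outer function is strictly increasing rather than merely strictly monotonic; everything else is a transcription of Theorem~\ref{T:3.1}.
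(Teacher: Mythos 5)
Your proposal is correct and follows the same route as the paper, whose entire proof of this corollary is the single line ``Follows immediately from Theorem~\ref{T:3.1}.'' Your write-up simply supplies the details the paper leaves implicit --- the curvature formula identifying flatness with $\det(f_{ij})=0$, and the reparametrization $\tilde h=ax+by$, $\tilde F(u)=F(u^{d})$ that turns the inner function $(ax+by)^{d}$ from Theorem~\ref{T:3.1} into a genuine perfect substitute --- and that last point is a worthwhile clarification, since the corollary as stated asserts that $h$ itself is a perfect substitute.
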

 \begin{proof} Follows immediately from Theorem \ref{T:3.1}.
 \end{proof}

This corollary generalizes of a result of the author and Vilcu.

\begin{corollary}\label{C:5.2}  Let $f=F(h(x_{1},\ldots,x_{n}))$ be a homothetic function such that $h$ is a $d$-homogeneous function with $d\ne 1$. Then the graph of $f$ has null Gauss-Kronecker curvature if and only if either 
\vskip.05in

 {\rm (i)}  $h$ satisfies the homogeneous Monge-Amp\`ere equation $\det(h_{ij})=0$ or 

\vskip.05in

{\rm (ii)} up to constants, $f=F\circ h$ is a linearly homogeneous function.
\end{corollary}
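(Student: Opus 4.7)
The plan is to reduce this corollary to the algebraic Monge-Amp\`ere characterization that was already developed in Section 2, together with the geometric dictionary for null Gauss-Kronecker curvature recalled in the introduction.

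First I would invoke the geometric fact, noted after equation \e{2} in the introduction (following \cite{R}), that the graph \e{2} of a function $f$ in $\mathbb{E}^{n+1}$ has null Gauss-Kronecker curvature if and only if $f$ satisfies the homogeneous Monge-Amp\`ere equation $\det(f_{ij})=0$. This converts the geometric hypothesis of the corollary into the analytic condition that drove the proofs of Theorems \ref{T:2.1} and \ref{T:2.2}.

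Next, for the forward direction, assume that $G(f)$ has null Gauss-Kronecker curvature, so $\det(f_{ij})=0$. Because $h$ is $d$-homogeneous with $d\ne 1$ and $F'\ne 0$, Theorem \ref{T:2.2} applies verbatim and yields either $\det(h_{ij})=0$ (case (i)) or, up to constants, $f=F\circ h$ is linearly homogeneous (case (ii)).

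For the converse, I would split into the two cases listed. If $h$ satisfies $\det(h_{ij})=0$, then Theorem \ref{T:2.1} (valid precisely because $d\ne 1$) gives $\det(f_{ij})=0$, hence null Gauss-Kronecker curvature. If instead $f$ is linearly homogeneous up to constants, then Lemma \ref{L} applied to $f$ gives $\det(f_{ij})=0$, again yielding null Gauss-Kronecker curvature. Since the corollary is essentially a repackaging of Theorem \ref{T:2.2} plus its converse assembled from Lemma \ref{L} and Theorem \ref{T:2.1}, there is no substantive obstacle; the only step that requires care is remembering that the hypothesis $d\ne 1$ is exactly what is needed to invoke Theorem \ref{T:2.2}, and that adding a constant to a linearly homogeneous function does not affect the Hessian, so the phrase ``up to constants'' in case (ii) is compatible with applying Lemma \ref{L}.
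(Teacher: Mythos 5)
Your proposal is correct and follows essentially the same route as the paper, which simply cites Theorems \ref{T:2.1} and \ref{T:2.2}; you have merely spelled out the geometric dictionary (null Gauss--Kronecker curvature $\Leftrightarrow$ $\det(f_{ij})=0$) and the role of Lemma \ref{L} in the converse, which the paper leaves implicit.
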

\begin{proof} Follows immediately from Theorem \ref{T:2.1} and Theorem \ref{T:2.2}.
\end{proof}

In 1928  Cobb and  Douglas  introduced in \cite{CD} a famous two-input production function
 \begin{align}\label{5.2} P = bL^{k}C^{1-k},\end{align}
 where $b$  represents the total factor productivity, $P$ the total production, $L$ the labor input and $C$ the capital input.
The Cobb-Douglas production function is widely used  in  economics to represent the relationship of an output to inputs.
 Later work in the 1940s prompted them to allow for the exponents on $C$ and $L$ vary, which resulting in estimates that subsequently proved to be very close to improved measure of productivity developed at that time (see \cite{D}).

 In its generalized form Cobb-Douglas'  production function may be expressed as
\begin{align}\label{5.3} P(x_{1},\ldots,x_{n}) =\gamma x_{1}^{\alpha_{1}}\cdots x_{n}^{\alpha_{n}},\end{align}
where $\gamma$ is a positive constant and $\alpha_{1},\ldots,\alpha_{n}$ are nonzero constants. 

In 1961  Arrow,  Chenery, Minhas and  Solow  introduced in  \cite{ACMS} another two-input production function given by
\begin{align}\label{5.4}Q=F\cdot (a K^{r}+(1-a) L^{r})^{\frac{1}{r}},\end{align}
where $Q$ is the output, $F$ the factor productivity, $a$ the share parameter, $K$ and $L$ the primary production factors, $r=(s-1)/s$, and $s=1/(1-r)$ is the elasticity of substitution.
The generalized form of ACMS production function is given by
\begin{align}\label{5.5} Q({\bf x})=\gamma \Big(\sum_{i=1}^{n}a_{i}^{\rho}x_{i}^{\rho}\Big)^{\! \frac{d}{\rho}},\end{align}
where $a_{i},p,\gamma, \rho$ are nonzero constants.

By applying Theorem \ref{T:2.2} we have the following two corollaries.

\begin{corollary}\label{C:5.3} Let $P(x_{1},\ldots,x_{n})$ be a Cobb-Douglas production function given by \e{5.3} and $F$ be a strictly increasing function. Then the graph of the homothetic production function $F\circ P$ has null Gauss-Kronecker curvature if and only if both $F$ and $P$ are linear.
\end{corollary}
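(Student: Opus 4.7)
The plan is to reduce the problem to Theorem~\ref{T:2.2} by an explicit computation of the Hessian determinant of the Cobb--Douglas function $P(x_{1},\ldots,x_{n})=\gamma\prod_{i=1}^{n}x_{i}^{\alpha_{i}}$, which is homogeneous of degree $d=\sum_{i}\alpha_{i}$. First I would record the second-order partials $P_{ij}=\alpha_{i}\alpha_{j}P/(x_{i}x_{j})-\delta_{ij}\alpha_{i}P/x_{i}^{2}$ and observe that the Hessian factors as a rank-one perturbation of a diagonal matrix, namely $(P_{ij})=P(vv^{\top}-B)$ with $v_{i}=\alpha_{i}/x_{i}$ and $B=\operatorname{diag}(\alpha_{i}/x_{i}^{2})$.

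Applying the matrix determinant lemma together with the identity $v^{\top}B^{-1}v=\sum_{i}\alpha_{i}=d$ then yields the closed form $\det(P_{ij})=(-1)^{n}(1-d)P^{n}\prod_{i}\alpha_{i}/\prod_{i}x_{i}^{2}$. On the positive orthant, with all $\alpha_{i}\ne 0$ and $\gamma\ne 0$, this determinant vanishes exactly when $d=1$.

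Feeding this identity into Corollary~\ref{C:5.2}, I would argue as follows. When $d\ne 1$, $\det(P_{ij})\ne 0$, so the corollary forces $F\circ P$ to be linearly homogeneous up to a constant, and integrating the ODE~\eqref{2.9} gives $F(u)=\alpha u^{1/d}+\beta$; this $F$ is affine linear only if $d=1$, contradicting the hypothesis. Hence null Gauss--Kronecker curvature with $F$ linear is possible only when $d=1$, in which case $P$ is linearly homogeneous; since the mixed partials of a nondegenerate Cobb--Douglas function never vanish, imposing in addition that $P$ itself be linear (a perfect substitute) forces the degenerate configuration in which both $F$ and $P$ are affine. The converse direction is immediate, since an affine $F\circ P$ has hyperplane graph.

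The main obstacle I expect is the Hessian computation itself: recognizing the rank-one-plus-diagonal structure and evaluating $v^{\top}B^{-1}v=d$ is what produces the decisive factor $(1-d)$; once this formula for $\det(P_{ij})$ is in hand, the rest of the argument is a straightforward dichotomy driven by Corollary~\ref{C:5.2}.
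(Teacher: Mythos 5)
Your Hessian computation is correct and is genuinely more explicit than what the paper offers: writing $(P_{ij})=P\,(vv^{\top}-B)$ with $v_{i}=\alpha_{i}/x_{i}$ and $B=\operatorname{diag}(\alpha_{i}/x_{i}^{2})$, the matrix determinant lemma and $v^{\top}B^{-1}v=\sum_{i}\alpha_{i}=d$ do give $\det(P_{ij})=(-1)^{n}(1-d)P^{n}\prod_{i}\alpha_{i}\big/\prod_{i}x_{i}^{2}$, so on $\mathbb R^{n}_{+}$ the Hessian determinant vanishes exactly when $d=1$. The paper merely asserts this fact (``impossible unless $\deg P=1$'') and instead argues by first splitting on whether $F\circ P$ is linearly homogeneous, then reducing the non-homogeneous case to Theorem \ref{T:2.2} via the substitution $\hat F(u)=F(\sqrt u)$, $\hat P=P^{2}$. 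Up to this point your route is sound and arguably an improvement.

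The gap is in your endgame. For $d\ne 1$, Theorem \ref{T:2.2} leaves two branches, and since $\det(P_{ij})\ne 0$ you land in the branch $F(u)=\alpha u^{1/d}+\beta$. You dismiss this by saying such an $F$ ``is affine linear only if $d=1$, contradicting the hypothesis'' --- but no hypothesis says $F$ is affine; that is precisely the conclusion you are trying to prove, so the dismissal is circular. Moreover the branch is not empty: take $F(u)=u^{1/2}$ and $P=\gamma x_{1}x_{2}$ (so $d=2$); then $F\circ P=\sqrt{\gamma x_{1}x_{2}}$ is linearly homogeneous, hence by Lemma \ref{L} its graph has null Gauss--Kronecker curvature, yet neither $F$ nor $P$ is linear. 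So this case cannot be eliminated by the dichotomy alone; the paper's own proof closes it only by asserting that a linearly homogeneous $F\circ P$ forces both $F$ and $P$ to be linear, an assertion the same example contradicts, so the real difficulty of the ``only if'' direction sits exactly where your argument (and the paper's) waves it away. Your treatment of $d=1$ is likewise circular (``imposing in addition that $P$ itself be linear'' assumes the conclusion); the clean move there is the paper's substitution $\hat F(u)=F(\sqrt u)$, $\hat P=P^{2}$, after which your determinant formula gives $\det(\hat P_{ij})\ne 0$ and the ODE \eqref{2.9} forces $F$ to be affine.
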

\begin{proof} Since $P$ is given by \e{5.3}, the homothetic production function $f=F\circ P$ cannot be linearly homogeneous unless both $F$ and $P$ are linear. In this case, up to constants, $f$ is a linear Cobb-Douglas production function.

Now, assume that $f=F\circ P$ is not a linearly homogeneous function and the graph of $f$ has null Gauss-Kronecker curvature. 

 Without loss of generality, we may assume  $\deg P\ne 1$, since otherwise we may consider $\hat F(u)=F(\sqrt{u})$ and $\hat P({\bf x})=P({\bf x})^{2}$ instead. Therefore we may apply Theorem \ref{T:2.2} to conclude that the Cobb-Douglas function satisfies $\det(P_{ij})=0$, which is impossible unless $\deg P=1$. Consequently, this case is impossible.

 The converse is easy to verify.
\end{proof}

\begin{corollary} \label{C:5.4} Let $Q(x_{1},\ldots,x_{n})$ be a ACMS production function given by \e{5.5} and $F$ be a strictly increasing function. Then the graph of the homothetic production function $F\circ Q$ has null Gauss-Kronecker curvature if and only if either
\begin{enumerate}
\item $\rho=1$, or 

\item  up to constants, $f=\gamma \left(\sum_{i=1}^{n}a_{i}^{\rho}x_{i}^{\rho}\right)^{\! \frac{1}{\rho}}$ with nonzero constants $a_{i},\gamma,\rho$.
\end{enumerate}
\end{corollary}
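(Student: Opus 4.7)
The plan is to reduce to Theorem~\ref{T:2.2} by re-layering the ACMS function. Write
\[ h({\bf x})=\sum_{i=1}^{n} a_{i}^{\rho}x_{i}^{\rho}, \qquad G(u)=\gamma u^{d/\rho}, \]
so that $h$ is a $\rho$-homogeneous function on ${\mathbb R}^{n}_{+}$, $Q=G\circ h$, and hence $f=F\circ Q=\tilde F\circ h$ with $\tilde F:=F\circ G$ still satisfying $\tilde F'\ne 0$ since $F'\ne 0$ and $d\ne 0$. Null Gauss-Kronecker curvature of the graph of $f$ is equivalent to $\det(f_{ij})=0$, and my argument will exploit this single factorization throughout.

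For the sufficient direction I would handle the two cases separately. In case~(2), the function $f$ is, up to constants, the ACMS with exponent $1/\rho$, which is linearly homogeneous, so Lemma~\ref{L} gives $\det(f_{ij})=0$. In case~(1), when $\rho=1$ the inner sum $h=\sum a_{i}x_{i}$ is linear, so $f=F(\gamma h^{d})$ depends only on the single linear combination $h$; a direct chain-rule calculation shows its Hessian has the form $c({\bf x})a_{i}a_{j}$, which is of rank at most one, and hence $\det(f_{ij})=0$ whenever $n\ge 2$.

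For the necessary direction, suppose $\det(f_{ij})=0$; if $\rho=1$ we are already in case~(1), so assume $\rho\ne 1$. Applying Theorem~\ref{T:2.2} to $f=\tilde F\circ h$, the hypothesis $\deg h=\rho\ne 1$ is exactly what is needed, and we obtain either $\det(h_{ij})=0$ or, up to constants, $f$ is linearly homogeneous. The first alternative is immediately ruled out: the Hessian of $h$ is diagonal with entries $h_{ii}=\rho(\rho-1)a_{i}^{\rho}x_{i}^{\rho-2}$, so $\det(h_{ij})=\prod_{i}\rho(\rho-1)a_{i}^{\rho}x_{i}^{\rho-2}\ne 0$ on ${\mathbb R}^{n}_{+}$ when $\rho\ne 0,1$ and the $a_{i}$ are nonzero. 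Hence the second alternative holds, and inspecting the proof of Theorem~\ref{T:2.2}, specifically solving \eqref{2.9} with $d$ replaced by $\rho$, forces $\tilde F(u)=\alpha u^{1/\rho}+\beta$ for some $\alpha\ne 0$. Substituting back yields
\[ f({\bf x})=\alpha\!\left(\sum_{i=1}^{n} a_{i}^{\rho} x_{i}^{\rho}\right)^{\!\! 1/\rho}+\beta, \]
which is precisely case~(2) after absorbing $\alpha$ and $\beta$ into the "up to constants" clause.

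The principal obstacle, modest as it is, is the choice of factorization: one must peel $Q$ as $G\circ h$ and apply Theorem~\ref{T:2.2} at the level of $\tilde F\circ h$ rather than at the level of $F\circ Q$. This matters because the degree of $Q$ itself is $d$, which may well equal $1$ and thus fall outside the scope of Theorem~\ref{T:2.2}; by contrast, the degree of $h$ is $\rho$, and the dichotomy $\rho=1$ versus $\rho\ne 1$ is precisely the one reflected in the statement of the corollary. The Hessian computation for $h$ is then routine since $h$ is a sum of functions of the individual coordinates.
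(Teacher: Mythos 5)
Your proof is correct, and it takes a cleaner route than the paper at the decisive step. The paper applies Theorem~\ref{T:2.2} to the factorization $F\circ Q$, so the inner function has degree $d$; since $d$ may equal $1$, the paper must first insert the auxiliary substitution $\hat F(u)=F(\sqrt u)$, $\hat Q=Q^2$, and it then only asserts, without computation, that $\det(Q_{ij})=0$ fails unless $\rho=1$. You instead re-factor $f=\tilde F\circ h$ with $h=\sum a_i^\rho x_i^\rho$ of degree $\rho$, which makes the hypothesis $d\ne 1$ of Theorem~\ref{T:2.2} coincide exactly with the dichotomy $\rho=1$ versus $\rho\ne1$ appearing in the corollary, and it reduces the Hessian computation to a diagonal matrix whose nonvanishing determinant $\prod_i\rho(\rho-1)a_i^\rho x_i^{\rho-2}$ is transparent. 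Your treatment of the second alternative is also more careful than the paper's: the paper claims $f=F\circ Q$ "cannot be linearly homogeneous unless $F$ and $Q$ are both linear," which is imprecise (any $F(u)=cu^{1/d}$ works), whereas you actually solve the ODE \eqref{2.9} to get $\tilde F(u)=\alpha u^{1/\rho}+\beta$ and land squarely in case~(2). The only points worth flagging are implicit in both arguments: the sufficiency of case~(1) needs $n\ge 2$ (for $n=1$ the rank-one Hessian argument gives nothing), and one should note that $\tilde F=F\circ G$ is twice differentiable with $\tilde F'\ne0$ because $G'(u)=\gamma(d/\rho)u^{d/\rho-1}\ne0$ on the range of $h$, which you do address. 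No gaps.
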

\begin{proof} Since $Q$ is given by \e{5.5}, the homothetic production function $f=F\circ Q$ cannot be linearly homogeneous unless $F$ and $Q$ are both linear. Thus we have case (2) of the corollary.

Now,  assume that $f$ is not a linearly homogeneous function and  $F\circ Q$ has null Gauss-Kronecker curvature.
Without loss of generality,
 we may assume  $\deg Q\ne 1$, since otherwise we may consider $\hat F(u)=F(\sqrt{u})$ and $\hat Q({\bf x})=Q({\bf x})^{2}$.  Therefore, according to Theorem \ref{T:2.2}, the ACMS function must satisfies $\det(Q_{ij})=0$, which is impossible unless $\rho=1$.
 
 The converse is easy to verify.
\end{proof}

\end{document}